\theoremstyle{plain}
\newtheorem{theorem}{Theorem}[section]
\newtheorem{lemma}[theorem]{Lemma}
\newtheorem{corollary}[theorem]{Corollary}
\theoremstyle{definition}
\let\on=\operatorname
\newcommand{\wt}[1]{\widetilde{#1}}
\newcommand{\ol}[1]{\overline{#1}}
\newcommand{\ud}{\,\mathrm{d}}
\begin{document}

\title{The energy functional on the Virasoro-Bott group with the $L^2$-metric has no local minima}
\author{Martins Bruveris$^1$}
\addtocounter{footnote}{1}
\footnotetext{Department of Mathematics, Imperial College London. London SW7~ 2AZ, UK. Partially supported by the ``Strategic and Initiative Fund'', Imperial College London
\texttt{m.bruveris08@imperial.ac.uk}}

\date{{\today}}

\maketitle

\begin{abstract}
The geodesic equation for the right invariant $L^2$-metric (which is a weak Riemannian metric) on each 
Virasoro-Bott group is equivalent to the KdV-equation. We prove that the corresponding energy functional, when restricted to paths with fixed endpoints, has no local minima. In particular solutions of KdV don't define locally length-minimizing paths. 
\end{abstract}

{\bf Keywords:} diffeomorphism group, Virasoro group, geodesic distance

{\bf 2000 Mathematics Subjects Classification:} Primary 35Q53, 58B20, 58D05, 58D15, 58E12

\section{Introduction}

It was reported in \cite{OvsienkoKhesin87, Segal1991} that  a curve in the Virasoro-Bott group is a geodesic for the right invariant $L^2$-metric if and only if its right logarithmic derivative is a solution of the Korteweg--de Vries equation. The first result of this kind was \cite{Arnold1966}, where it was shown that incompressible Euler equations are geodesic equations for the group of volume-preserving diffeomorphisms. Such results can be used to show well-posedness of these equations, as done in \cite{Ebin1970}. The same methods were used later for the Camassa-Holm equation and other equations originating from Sobolev-type norms on the diffeomorphism group in \cite{Constantin2003, Constantin2007, GayBalmaz2009}.

This interpretation of hydrodynamic equations as geodesic equations on infinite dimensional manifolds opens the way for a variety of geometric questions, that can be asked. For example, the metric on the Virasoro-Bott group, for which the geodesics coincide with solutions of the KdV equation, behaves in some ways very differently from a metric on a finite dimensional manifold. It was proved in \cite{Michor122} that the geodesic distance, induced by this metric, vanishes. This means that between any two elements in the Virasoro-Bott group there exist paths of arbitrary small length. This is in stark constrast to the finite dimensional case, where the geodesic distance never vanished. Other manifolds, for which vanishing geodesic distance has been observed, include the group of diffeomorphisms with compact support $\on{Diff}_c(N)$ of any manifold $N$, the space of immersions $\on{Imm}(M,N)$ of a compact manifold $M$ into a Riemannian manifold $N$ and the corresponding shape space $\on{Imm}(M,N) / \on{Diff}(M)$, all equipped with the corresponding $L^2$-metrics. See \cite{Michor102} for further details. Other questions which might be studied include the distribution of conjugate points along geodesics \cite{Misiolek1997}, the curvature of the space and the invertibility of the exponential map \cite{Constantin2007}.

Although we know that the geodesic distance vanishes globally, this result alone does not give us any information about the local behaviour of the energy functional. For fixed, distinct endpoints $\varphi_0, \varphi_1$ geodesics between $\varphi_0$ and $\varphi_1$ can be defined as critical points of the energy functional, restricted to the space of paths with fixed end-points. It might be that (some) geodesics are locally energy-minimizing, while the potentially strange paths of arbitrary small energy lie somewhere else. Locally is to be understood with respect to some topology on the space of paths. We will use the $C^\infty$-topology on $\mathcal S([0,T]\times \mathbb R)$. In this paper we show that this is not the case. We prove that the energy functionals for the Virasoro-Bott group and for the diffeomorphism group of $\mathbb R$ have no local minima. As a corollary we get the same result for the length functional. Given any curve there is a close by curve with the same endpoints that uses less energy. Hence no geodesic is locally energy or length minimizing.

\section{The Virasoro-Bott Groups}

Let $\operatorname{Diff}_{\mathcal S}(\mathbb R)$ be the group of diffeomorphisms of $\mathbb R$, which fall rapidly to the identity. It consists of elements of the form $\operatorname{Id} + f$, where $f \in \mathcal S(\mathbb R)$ is the space of rapidly falling functions. The topology on $\mathcal S(\mathbb R)$ is induced by the family of seminorms
\[ \lVert f \rVert_{\mathcal S^{k,m}} = \sum_{i\leq m} \lVert (1+\lvert x \rvert^2)^k \partial_x^i f \rVert_\infty \]
for $k,m \in \mathbb N$. Informally the space $\mathcal S(\mathbb R)$ consists of smooth functions, which decrease to 0 faster than any rational function. This is a regular Lie group, see \cite[6.4]{Michor109} or \cite{MichorG} for more details.
For $\varphi\in\on{Diff}_{\mathcal{S}}(\mathbb R)$ let $\varphi':\mathbb R\to \mathbb R^+$ be the mapping 
given by $T_x\varphi\cdot\partial_x=\varphi'(x)\partial_x$. 
Then  
\begin{gather*} 
c:\on{Diff}_{\mathcal{S}}(\mathbb R)\times \on{Diff}_{\mathcal{S}}(\mathbb R)\to \mathbb R
\\
c(\varphi,\psi):=\frac12\int\log(\varphi\circ\psi)'\ud\log\psi'  
     = \frac12\int\log(\varphi'\circ\psi)\ud\log\psi' 
\end{gather*}satisfies $c(\varphi,\varphi^{-1})=0$, $c(\on{Id},\psi)=0$, $c(\varphi,\on{Id})=0$ and is a 
smooth group cocycle, called the Bott cocycle: 
\begin{displaymath}
c(\varphi_2,\varphi_3) - c(\varphi_1\circ\varphi_2,\varphi_3)+c(\varphi_1,\varphi_2\circ\varphi_3) -
c(\varphi_1,\varphi_2) =0.
\end{displaymath}

The corresponding central extension group
$\mathbb R\times_c\on{Diff}_{\mathcal S}(\mathbb R)$, called the Virasoro-Bott group, is
a trivial $\mathbb R$-bundle  
$\mathbb R\times\on{Diff}_{\mathcal S}(\mathbb R)$ that becomes a regular Lie  
group relative to the operations  
\begin{displaymath}
\binom{\varphi}{\alpha}\binom{\psi}{\beta}  
     =\binom{\varphi\circ\psi}{\alpha+\beta+c(\varphi,\psi)},\quad 
\binom{\varphi}{\alpha}^{-1}=\binom{\varphi^{-1}}{-\alpha}\quad 
\varphi, \psi \in \on{Diff}_{\mathcal S}(\mathbb R),\; \alpha,\beta \in \mathbb R . 
\end{displaymath}
Other versions of the Virasoro-Bott group are the following:
$\mathbb R\times_c \on{Diff}_c(\mathbb R)$ where $\on{Diff}_c(\mathbb R)$ is the group of all 
diffeomorphisms with compact support, or the periodic case $\mathbb R\times_c\on{Diff}^+(S^1)$.
One can also apply  the homomorphism $\exp(ia)$ to the center and replace it by $S^1$. 
To be specific we shall treat the most difficult case $\on{Diff}_{\mathcal{S}}(\mathbb R)$ 
in this paper. All other 
cases require only obvious minor changes in the proofs.

A short introduction of the Virasoro-Bott group can be found in \cite{Michor122}. For a detailed treatment one should consult \cite{Khesin09}, \cite{Michor109} or \cite{Michor69}.

\section{Local Minima for the Energy}

The main result of the paper for the diffeomorphism group is the following theorem.

\begin{theorem}
Let $\varphi(t,x)$ with $t \in [0, T]$ be a path in $\operatorname{Diff}_{\mathcal S}(\mathbb R)$. Let $U$ be a neighbourhood of $\varphi$ in the space $\mathcal S([0,T] \times \mathbb R)$. Then there exists a path $\psi \in U$ with the same endpoints as $\varphi$ and
\[ E(\psi) < E(\varphi), \]
where $E(.)$ is the energy of a path w.r.t. the right-invariant $L^2$-metric.
\end{theorem}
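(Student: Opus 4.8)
The statement is equivalent to asserting that $\varphi$ is not a local minimum of the energy in the $C^\infty$-topology: indeed, ``$\varphi$ is not a local minimum'' unwinds to ``every neighbourhood $U$ of $\varphi$ contains a path $\psi$ with the same endpoints and $E(\psi)<E(\varphi)$'', which is exactly the claim. I may therefore discard the trivial stationary case $\varphi_t\equiv 0$ (where $E(\varphi)=0$ is the global minimum and nothing can be proved) and assume $E(\varphi)>0$, i.e.\ $\varphi_t\not\equiv 0$. Writing the energy of the right-invariant $L^2$-metric in Lagrangian coordinates,
\[ E(\varphi)=\frac12\int_0^T\!\!\int_{\mathbb R}\varphi_t(t,x)^2\,\varphi_x(t,x)\,\ud x\,\ud t , \]
the plan is to exhibit a single admissible variation $h$ --- smooth, rapidly decreasing in $x$, with $h(0,\cdot)=h(T,\cdot)=0$ --- along which the second variation is strictly negative. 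Then, setting $\psi_\epsilon=\varphi+\epsilon h$ and Taylor expanding $\epsilon\mapsto E(\psi_\epsilon)$, the quadratic term forces $E(\psi_\epsilon)<E(\varphi)$ for all small $\epsilon\neq 0$ of a suitable sign (a nonzero linear term only helps, by choosing the sign of $\epsilon$). Since $\psi_\epsilon\to\varphi$ in $\mathcal S([0,T]\times\mathbb R)$ as $\epsilon\to 0$ and $(\psi_\epsilon)_x>0$ for small $\epsilon$, the path $\psi_\epsilon$ lies in any prescribed $U$ once $\epsilon$ is small enough, which finishes the proof.

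First I would record the variations. A direct computation gives
\[ \delta^2 E(\varphi)(h,h)=\int_0^T\!\!\int_{\mathbb R}\bigl(\varphi_x\,h_t^2+2\,\varphi_t\,h_t\,h_x\bigr)\,\ud x\,\ud t . \]
The integrand is the quadratic form associated to the symmetric matrix $\left(\begin{smallmatrix}\varphi_x&\varphi_t\\ \varphi_t&0\end{smallmatrix}\right)$ evaluated at $(h_t,h_x)$; its determinant equals $-\varphi_t^2$, which is strictly negative wherever $\varphi_t\neq 0$. Hence the second variation is pointwise indefinite, and the negative cone is entered by making $h_x$ of definite sign and large relative to $h_t$. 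The task is to realise a point of this negative cone by an honest gradient field $(h_t,h_x)$ that, in addition, satisfies the temporal boundary condition --- the constraint $\int_0^T h_t\,\ud t=0$ for every $x$, which rules out the naive choice $h=H(x-ct)$.

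The device I would use is a slowly travelling, highly oscillatory wave packet localised where $\varphi$ is genuinely moving. Fix a point $(t_*,x_*)$ with $\varphi_t(t_*,x_*)>0$ (replacing $c$ by $-c$ if the sign is reversed) and a box $I\times J\subset(0,T)\times\mathbb R$ around it on which $\varphi_t\geq c_0>0$ and $\varphi_x\leq M$. Choose bumps $\tau\in C_c^\infty(I)$, $\eta\in C_c^\infty(J)$, a small speed $0<c<2c_0/M$, and set
\[ h(t,x)=\tau(t)\,\eta(x)\,\cos\!\bigl(n(x-ct)\bigr). \]
Differentiating, $h_x=-n\tau\eta\sin(n(x-ct))+O(1)$ and $h_t=cn\tau\eta\sin(n(x-ct))+O(1)$, so to leading order $(h_t,h_x)$ points along the ray $(c,-1)$, which lies in the negative cone because $c(c\varphi_x-2\varphi_t)<0$ on the box. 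The key point is that the envelope derivatives $\tau',\eta'$ enter only at order $O(n)$, whereas the indefinite cross term produces order $O(n^2)$; integrating and using $\sin^2\to\tfrac12$ yields
\[ \delta^2 E(\varphi)(h,h)=\frac{n^2}{2}\int_0^T\!\!\int_{\mathbb R}\tau^2\eta^2\,c\bigl(c\,\varphi_x-2\,\varphi_t\bigr)\,\ud x\,\ud t+O(n), \]
whose leading coefficient is strictly negative. Thus $\delta^2E(\varphi)(h,h)<0$ once $n$ is fixed large enough, and the conclusion of the first paragraph applies.

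I expect the genuine work to be this last estimate: separating the negative $O(n^2)$ bulk from the positive envelope corrections cleanly, and choosing the parameters in the correct order --- first the sign and size of $c$ together with the box and the bumps $\tau,\eta$, then $n$ large to make $\delta^2E<0$, and only then $\epsilon\to 0$ for the prescribed neighbourhood $U$. Everything else (the variation formulas, positivity of $(\psi_\epsilon)_x$, $C^\infty$-convergence) is routine. For the Virasoro--Bott group the only change is an additional nonnegative contribution from the central coordinate $\alpha$; choosing $\alpha$ along the perturbed path appropriately keeps this contribution at order $O(n)$, so the same $O(n^2)$ negative term dominates and the argument goes through verbatim.
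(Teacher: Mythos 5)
Your proposal is correct, but it takes a genuinely different route from the paper's. The paper never computes a second variation: it perturbs $\varphi$ nonlinearly by a space-dependent time reparametrisation $\psi(t,x)=\varphi(r(t,x),x)$, with $r^{-1}(t,x)=t+\epsilon^{m+a}f(t)\,g\bigl(\tfrac{x-x_0}{\epsilon}\bigr)$ supported in an $\epsilon$-thin strip in $x$, expands the energy exactly, and shows the cross term produces a definite-sign contribution of order $\epsilon^{m+a}$ dominating all others; this yields a whole family of paths that are $\epsilon^a$-close in any prescribed seminorm $\mathcal S^{k,m,n}$ while saving energy at least $C\epsilon^{m+a}$ (Lemma \ref{lmm:en_bound}). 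You instead run a classical second-variation argument: the Hessian integrand is the indefinite form with matrix $\left(\begin{smallmatrix}\varphi_x&\varphi_t\\ \varphi_t&0\end{smallmatrix}\right)$, you realise a negative direction by a high-frequency wave packet $h=\tau\eta\cos(n(x-ct))$ with small speed $c$ (your $O(n^2)$ versus $O(n)$ bookkeeping checks out, as does the constraint $h(0,\cdot)=h(T,\cdot)=0$), and then the exact cubic expansion of $E(\varphi+\epsilon h)$ in $\epsilon$ settles both the critical and the non-critical case. Both proofs exploit the same mechanism --- the indefinite cross term $\varphi_t h_t h_x$ --- but yours is shorter and more conceptual for this theorem, and it has the additional merit of explicitly excluding the constant path $\varphi_t\equiv 0$, for which the statement as literally written is false (the paper's proof also tacitly assumes a point with $\varphi_t\neq 0$ but never says so). What the paper's heavier, quantitative formulation buys is the Virasoro--Bott extension: Lemma \ref{lmm:vir_en} needs, for a single smallness parameter, closeness $\epsilon^{m+1}$ together with savings $\geq C\epsilon^{2m+1}$, so that a second perturbation at a different spatial scale --- large second derivatives (to move the endpoint of the central coordinate, which depends on $\varphi_{xx}$) but small first derivatives (to leave the energy nearly unchanged) --- can restore $\beta(T)=\alpha(T)$ at a cost $O(\epsilon^{2m+3/2})$ below the savings. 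Your closing claim that the Virasoro--Bott case goes through ``verbatim'' is too quick on exactly this point: with $\psi_\epsilon=\varphi+\epsilon h$, correcting the endpoint forces a shift of $\beta_t$ whose contribution to the central term of the energy involves $h_{tx}$, $h_{xx}$ and is generically of order $n^2\epsilon$ or $n^2\epsilon^2$, i.e.\ competing at the same order as your gain, so some version of the paper's two-scale balancing is genuinely needed there.
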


The same statement also holds for the Virasoro-Bott group.

\begin{theorem}
Let $(\varphi(t,x), \alpha(t))$ with $t \in [0, T]$ be a path in the Virasoro-Bott group $\mathbb R \times_c \operatorname{Diff}_{\mathcal S}(\mathbb R)$. Let $U$ be a neighbourhood of $(\varphi, \alpha)$ in the space $\mathcal S([0,T] \times \mathbb R) \times C^\infty([0,T])$. Then there exists a path $(\psi, \beta) \in U$ with the same endpoints as $(\varphi, \alpha)$ and
\[ E(\psi, \beta) < E(\varphi, \alpha), \]
where $E(.)$ is the energy of a path w.r.t. the right-invariant $L^2$-metric.
\end{theorem}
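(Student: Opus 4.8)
The plan is to reduce the statement to the preceding theorem for $\on{Diff}_{\mathcal S}(\mathbb R)$ by splitting the energy into a diffeomorphism part and a central part, and then treating the two freedoms $\varphi$ and $\alpha$ separately. Writing $u=\varphi_t\circ\varphi^{-1}$ for the right logarithmic derivative of the base path, the right logarithmic derivative of $(\varphi,\alpha)$ in the Virasoro-Bott group is a pair $(u,a)$ whose central component is obtained by differentiating the Bott cocycle; a direct computation gives
\[ a(t)=\alpha_t+F_\varphi(t),\qquad F_\varphi(t)=-\tfrac12\int_{\mathbb R}\frac{\varphi_{tx}\varphi_{xx}}{\varphi_x^2}\ud x. \]
Hence the energy decomposes as $E(\varphi,\alpha)=E_{\on{Diff}}(\varphi)+\tfrac12\int_0^T a(t)^2\ud t$, where $E_{\on{Diff}}$ is the energy of the base path treated in the preceding theorem.

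First I would optimise over the central variable. For the base path $\varphi$ fixed, the functional $\beta\mapsto \tfrac12\int_0^T(\beta_t+F_\varphi)^2\ud t$ on paths $\beta$ with prescribed endpoints $\beta(0)=\alpha(0)$, $\beta(T)=\alpha(T)$ is strictly convex, and its Euler--Lagrange equation forces $\beta_t+F_\varphi$ to be constant; the minimal value equals $\tfrac{1}{2T}\bigl(\alpha(T)-\alpha(0)+\int_0^T F_\varphi\ud t\bigr)^2$. If the given $\alpha$ is not this minimiser, then a $C^\infty$-small change of $\alpha$ alone already lowers the energy and we are done; so I may assume that $a\equiv c_0$ is constant and that the central part attains the displayed minimal value.

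Next I would perturb the base path. By the preceding theorem there are paths $\psi=\varphi+\delta$, with $\delta$ arbitrarily $C^\infty$-small and vanishing at $t=0,T$, for which $E_{\on{Diff}}(\psi)<E_{\on{Diff}}(\varphi)$ (for instance along high-frequency directions in which the index form of $E_{\on{Diff}}$ is negative). For each such $\psi$ I would re-optimise the central variable as above, obtaining
\[ E(\psi,\beta_\psi)=E_{\on{Diff}}(\psi)+\frac{1}{2T}\Bigl(\alpha(T)-\alpha(0)+\int_0^T F_\psi\ud t\Bigr)^2. \]
Since $\psi$ is $C^\infty$-close to $\varphi$ the minimiser $\beta_\psi$ is close to $\alpha$, so $(\psi,\beta_\psi)$ lies in the given neighbourhood $U$. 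It then suffices to choose the perturbation so that the cocycle integral $I(\psi)=\int_0^T F_\psi\ud t$ is left unchanged, or more generally moved no farther from the value $-(\alpha(T)-\alpha(0))$ that minimises the central part; then the central contribution does not increase while the diffeomorphism part strictly decreases, giving $E(\psi,\beta_\psi)<E(\varphi,\alpha)$.

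The main obstacle is exactly the control of $I(\psi)$. As $F_\psi$ is quadratic in the derivatives $\psi_{tx},\psi_{xx},\psi_x$, high-frequency perturbations can displace $I$, and one must verify that the energy-decreasing directions of the preceding theorem are rich enough to respect the single scalar constraint on $I$ while keeping $E_{\on{Diff}}(\psi)<E_{\on{Diff}}(\varphi)$. I would close this in one of two ways: either by exhibiting a two-parameter family of energy-decreasing high-frequency directions and invoking the implicit function theorem to enforce $I(\psi)=I(\varphi)$ along a one-parameter subfamily; or by a scaling estimate showing that, along these high-frequency directions, the first and second variations of $I$ grow strictly more slowly than the divergent negative second variation of $E_{\on{Diff}}$, so that after re-optimising $\beta$ the total second variation remains negative. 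Either route yields the theorem.
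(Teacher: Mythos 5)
Your reduction is sound as far as it goes: the splitting of the energy, the strict-convexity argument that lets you assume the central part of the energy is already optimal, and the identification of the cocycle integral $I(\psi)=\int_0^T F_\psi \ud t$ as the single scalar that must be controlled are all correct (up to immaterial normalization constants). But the proposal stops exactly where the real work begins, and of the two routes you sketch for closing the gap, neither is carried out and the second one rests on a premise that is quantitatively backwards. The energy-decreasing perturbations supplied by the diffeomorphism-group theorem are time reparametrisations concentrated at spatial scale $\epsilon$; with the exponents of Lemma \ref{lmm:en_bound} (taking $a=m+1$) they gain energy of order $\epsilon^{2m+1}$, while their drift on $I$ --- which involves the second derivatives $\psi_{tx},\psi_{xx}$ --- is only bounded by $O(\epsilon^{2m})$, i.e.\ one order \emph{larger} than the gain. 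Precisely because $I$ sees second derivatives and $E_{\on{Diff}}$ only first derivatives, spatially concentrated perturbations generically move $I$ \emph{more}, not less, than they move the energy; so after re-optimising $\beta$ the central part can increase by $O(\epsilon^{2m})$ and swamp the $O(\epsilon^{2m+1})$ gain. Your scaling route therefore fails, or at best is unsupported by any estimate one could actually prove along these lines.

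The paper's proof turns this same first-versus-second derivative asymmetry to its advantage, and this is the idea missing from your proposal. After producing $\wt \varphi$ with $E_{\on{Diff}}(\varphi)-E_{\on{Diff}}(\wt \varphi)\geq C\epsilon^{2m+1}$, it adds a second, separate perturbation $\psi=\wt \varphi+\epsilon^{2m+\frac32}\lambda f(t)\,g\bigl(\epsilon^{-2}(x-x_0)\bigr)$ living at the finer scale $\epsilon^2$: its energy cost is only $O(\epsilon^{2m+\frac32})$ (Claim B of the paper), negligible against the gain, but its effect on the cocycle integral is $\lambda\epsilon^{2m-\frac12}X(\epsilon)$ with $X$ bounded away from zero, which dominates the unwanted $O(\epsilon^{2m})$ drift and can therefore be tuned by the free parameter $\lambda$ to cancel it exactly. (The paper also does not re-optimise $\beta$; it defines $\beta$ so that the central contribution to the energy is literally unchanged and enforces $\beta(T)=\alpha(T)$ via $C(\psi)=C(\varphi)$ --- a bookkeeping difference from your convexity reduction, not a substantive one.) This two-scale construction and its accompanying estimates are precisely the content of your first, unexecuted route; without them the proof is incomplete.
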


An immediate corollary is the following.

\begin{corollary}
The energy functionals on the diffeomorphism and Virasoro-Bott groups, even when restricted to paths with fixed endpoints, have no local minima. All stationary points are therefore saddle-points.
\end{corollary}

To proof the theorems, recall that the topology on $\mathcal S([0,T] \times \mathbb R)$ is defined using the family of seminorms
\[ \lVert \varphi \rVert_{\mathcal S^{k,m,n}} = \sum_{\substack{i\leq m \\ j \leq n}} \lVert (1+\lvert x \rvert^2)^k \partial_x^i \partial_t^j \varphi \rVert_\infty \]
with $k,m,n \in \mathbb N$ and that therefore every neighbourhood $U$ of $\varphi$ will contain an $\epsilon$-neighbourhood of one these seminorms. Therefore it is sufficient to show that there exist paths $\psi$ with the same endpoints and
\[ \lVert \psi - \varphi \rVert_{\mathcal S^{k,m,n}} < \epsilon, \]
which have less energy.

This is done in the following lemmas. We first prove it for the diffeomorphism group, where the energy is defined as
\[ E(\varphi) = \iint \varphi_t(t,x)^2 \varphi_x(t,x) \ud x \ud t, \]
In a second step we consider the Virasoro-Bott group. The energy there is defined by
\[
E(\varphi, \alpha) = \iint \varphi_t^2 \varphi_x \ud x \ud t + \int_0^T \left(\alpha_t - \int_{\mathbb R} \frac{\varphi_{tx}\varphi_{xx}}{\varphi_x^2} \ud x \right)^2 \ud t.
\]
When deforming the path there, one has to be careful that the extension part $b(t)$ of the new path $(\psi(t,x), \beta(t))$ hits the right endpoint.

\begin{lemma}
\label{lmm:en_bound}
Let $\varphi(t,x)$ with $t \in [0, T]$ be a path in $\operatorname{Diff}_{\mathcal S}(\mathbb R)$. Given $k,m,n \in \mathbb N$ there exists for small $\epsilon > 0$ and $a \in \mathbb N_{>0}$ a family of paths $\psi^\epsilon(t,x)$ with the same endpoints, close to the original path
\[ \lVert \varphi - \psi^\epsilon\rVert_{\mathcal S^{k,m,n}} < \epsilon^a \]
which use less energy. Furthermore there exists a constant $C > 0$ such that we have a lower bound for the energy saved
\[ E(\varphi) - E(\psi^\epsilon) \geq C \epsilon^{m+a}. \]
\end{lemma}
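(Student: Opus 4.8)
The plan is to deform the path by adding a small, highly oscillatory perturbation that leaves the endpoints untouched but decreases the energy. The key observation is that the energy $E(\varphi) = \iint \varphi_t^2 \varphi_x \,\mathrm{d}x\,\mathrm{d}t$ is quadratic in $\varphi_t$ but only linear in $\varphi_x$. This asymmetry is what we want to exploit: a perturbation whose spatial derivative oscillates rapidly can, on average, make $\varphi_x$ smaller (pushing the weight in the $\varphi_t^2$ term down) while contributing to $\varphi_t$ only through its time-derivative, which we can keep controlled. The natural ansatz is $\psi^\epsilon(t,x) = \varphi(t,x) + \epsilon\, g(t)\, h_\epsilon(t,x)$, where $g(t)$ is a fixed bump function vanishing (with all its derivatives) at $t=0$ and $t=T$ so that the endpoints are preserved, and $h_\epsilon$ is an oscillatory factor, say of the form $\sin(x/\epsilon)$ times a rapidly-decaying window in $x$ so that everything stays in $\mathcal{S}$.

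First I would fix the oscillation so that $\partial_x \psi^\epsilon$ picks up a term of order $1$ (since differentiating $\sin(x/\epsilon)$ in $x$ produces a factor $1/\epsilon$, which cancels the prefactor $\epsilon$), while $\partial_t \psi^\epsilon$ remains of order $\epsilon$ (differentiating in $t$ produces no compensating $1/\epsilon$). Then I would expand $E(\psi^\epsilon)$ and track the leading contribution. The cross terms and the $\varphi_t^2$-times-(new $\varphi_x$) terms are the ones that can lower the energy: when one integrates the oscillatory $\partial_x$-perturbation against the smooth weight $\varphi_t^2$ over $x$, the Riemann--Lebesgue type averaging kills the first-order term, and the genuine gain comes from a definite-sign second-order contribution. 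I would choose the phase and window of $h_\epsilon$ so that this averaged second-order term is strictly negative, producing the claimed lower bound $E(\varphi) - E(\psi^\epsilon) \geq C\epsilon^{m+a}$ for a suitable exponent.

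Next I would verify the seminorm estimate $\lVert \varphi - \psi^\epsilon\rVert_{\mathcal{S}^{k,m,n}} < \epsilon^a$. Here the point is bookkeeping: the worst case in this seminorm is the top spatial derivative $\partial_x^m$, which brings down a factor $\epsilon^{-m}$ from the oscillation, so to beat it I must start with a prefactor stronger than $\epsilon^{m+a}$ rather than just $\epsilon$. In other words, I should rescale the ansatz as $\psi^\epsilon = \varphi + \epsilon^{m+a}\, g(t)\, h_{\epsilon}(t,x)$ with oscillation frequency $1/\epsilon$, so that $\partial_x^i$ for $i \le m$ contributes at most $\epsilon^{m+a-i} \ge \epsilon^a$ in the supremum norm, and the rapidly-decaying window controls the polynomial weight $(1+|x|^2)^k$. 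This is consistent with the energy-gain exponent $\epsilon^{m+a}$ appearing in the statement, which is exactly the size of the perturbation's contribution once the derivative gain from oscillation is accounted for.

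\emph{The main obstacle} I expect is arranging the signs so that the net energy change is strictly negative to leading order, rather than merely averaging to zero. The first-order (in the perturbation) variation of the energy vanishes only at critical points, so for a general path $\varphi$ there is a first-order term; I must either choose $g, h_\epsilon$ so that this first-order term is itself negative, or push it into a higher order by oscillation and extract a sign from the second-order term. The cleanest route is to exploit the oscillation to annihilate the first-order term by averaging (Riemann--Lebesgue), leaving a second-order term of the form $-\,\text{(positive)}\times\iint \varphi_t^2 (\partial_x h_\epsilon)^2$-type integral, whose sign I can fix. Making this averaging argument rigorous while simultaneously respecting the $\mathcal{S}$-decay and the fixed-endpoint constraint, and tracking the exponents so that both the closeness bound and the energy-gain bound come out with the stated powers of $\epsilon$, is the delicate part of the construction.
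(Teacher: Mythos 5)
There is a genuine gap, and it is fatal to the whole strategy: the mechanism you want to exploit does not exist, because the energy density $\varphi_t^2\varphi_x$ is \emph{affine} in $\varphi_x$. Write $\psi^\epsilon=\varphi+p$ with $p=\epsilon^{m+a}g(t)h_\epsilon(t,x)$ vanishing at $t=0,T$. Expanding exactly,
\begin{equation*}
E(\psi^\epsilon)-E(\varphi)=\iint\bigl(2\varphi_t\varphi_x\,p_t+\varphi_t^2\,p_x\bigr)\ud x\ud t
+\iint\bigl(p_t^2\,\varphi_x+2\varphi_t\,p_t\,p_x\bigr)\ud x\ud t
+\iint p_t^2\,p_x\ud x\ud t ,
\end{equation*}
and there is \emph{no term quadratic in $p_x$}. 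Hence the ``definite-sign second-order contribution'' of the form $-(\text{positive})\times\iint\varphi_t^2(\partial_x h_\epsilon)^2$, on which your sign argument rests, cannot arise: spatial oscillation enters only linearly (and that term you yourself annihilate by Riemann--Lebesgue) or through the cross term $2\iint\varphi_t p_tp_x$. For your ansatz $h_\epsilon=w(x)\sin(x/\epsilon)$ the $1/\epsilon$-part of the cross term is proportional to $\sin(x/\epsilon)\cos(x/\epsilon)=\tfrac12\sin(2x/\epsilon)$, so it is also killed by averaging. What survives after averaging is of order $\epsilon^{2(m+a)}$: the term $\iint p_t^2\,\psi^\epsilon_x\ud x\ud t\geq0$, which is strictly positive (any nontrivial $p$ fixing the endpoints has $p_t\not\equiv0$, and $\psi^\epsilon_x>0$ since $\psi^\epsilon$ must remain a path of diffeomorphisms), plus indefinite cross remainders of the same size. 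So you cannot certify any decrease; for generic $\varphi$ your deformation \emph{increases} the energy by $\tfrac12\epsilon^{2(m+a)}\iint g'(t)^2w(x)^2\varphi_x\ud x\ud t$ up to lower-order corrections. There is also a quantitative obstruction independent of signs: once the first variation is suppressed by oscillation, any conceivable gain is quadratic in the amplitude, i.e.\ $O(\epsilon^{2(m+a)})=o(\epsilon^{m+a})$, so the claimed lower bound $E(\varphi)-E(\psi^\epsilon)\geq C\epsilon^{m+a}$, which is \emph{first} order in the amplitude, is out of reach for this construction.

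A sign-definite first-order term is exactly what the paper's proof manufactures, and this is the idea your proposal is missing. Instead of adding a transverse oscillation, the paper deforms $\varphi$ along itself by an $x$-dependent reparametrisation of time, $\psi(t,x)=\varphi(r(t,x),x)$ with $r^{-1}(t,x)=t+\epsilon^{m+a}f(t)g\bigl(\epsilon^{-1}(x-x_0)\bigr)$, where $f\geq0$ is supported near a time $t_0$, $g$ is monotone, and $(t_0,x_0)$ is chosen so that $\varphi_t>0$ on a neighbourhood. Endpoints are preserved automatically, and the exact identity
\begin{equation*}
E(\psi)=\iint\Bigl(-(r^{-1})_x\,\frac{\varphi_t^3}{(r^{-1})_t^2}+\frac{\varphi_t^2\varphi_x}{(r^{-1})_t}\Bigr)\ud x\ud t
\end{equation*}
contains the term $-\iint(r^{-1})_x\varphi_t^3/(r^{-1})_t^2\ud x\ud t$, which is first order in the perturbation and has a fixed negative sign because $fg'\geq0$ and $\varphi_t^3>0$ on the support of the perturbation; after the substitution $x=x_0+\epsilon y$ it is bounded above by $-X\epsilon^{m+a}$ with $X>0$, which is precisely where the rate $\epsilon^{m+a}$ in the statement comes from. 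In short, the gain has to come from correlating the deformation with $\varphi_t$ --- letting different points $x$ run along the path at slightly different speeds --- not from trying to make $\varphi_x$ ``smaller on average'', which a zero-mean oscillation cannot do against the slowly varying weight $\varphi_t^2$.
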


The power $a$ will be needed when we extend the argument to the Virasoro-Bott group. For now we can simply use $a=1$.

\begin{proof}
Let $r(t,x): [0,T] \times \mathbb R \to [0,T]$ be a smooth function with the property that for each $x \in \mathbb R$ it is a reparametrisation of the time interval. Define the new path via
\[ \psi(t,x) = \varphi(r(t,x),x) \]

{\bf Claim.} If $\lVert r(t,x) - t \rVert_{C^{m,n}}  < \epsilon$ then $\lVert \psi - \varphi \rVert_{S^{k,m,n}} \leq D \epsilon$ with a constant that only depends on $\lVert \varphi \rVert_{S^{k,m,n+m+1}}$.

We postpone the proof of the claim to the end of the proof. Now we want to compute $E(\psi)$.
\begin{align*}
E(\psi) &= \int_0^T \int_{\mathbb R} \psi_t^2 \psi_x \ud x \ud t \\
&= \int_0^T \int_{\mathbb R} r_t(t,x)^2 \varphi_t(r(t,x),x)^2 (r_x(t,x) \varphi_t(r(t,x),x) + \varphi_x(r(t,x),x)) \ud x \ud t \\
&= \int_0^T \int_{\mathbb R} r_t(r^{-1}(t,x),x) \varphi_t(t,x)^2 (r_x(r^{-1}(t,x),x) \varphi_t(t,x) + \varphi_x(t,x)) \ud x \ud t
\end{align*}
We can express the derivatives of $r$ using the derivatives of $r^{-1}$ via the following rules
\begin{align*}
r(r^{-1}(t,x),x) &= t \\
r_t(r^{-1}(t,x),x) &= \frac{1}{(r^{-1})_t(t,x)} \\
r_x(r^{-1}(t,x),x) &= -(r^{-1})_x(t,x) r_t(r^{-1}(t,x),x) \\
&= -\frac{(r^{-1})_x(t,x)}{(r^{-1})_t(t,x)}
\end{align*}
to obtain
\begin{align*}
L(\psi) &= \iint -(r^{-1})_x(t,x) \frac{\varphi_t(t,x)^3}{(r^{-1})_t(t,x)^2} + \frac{\varphi_t^2(t,x) \varphi_x(t,x)}{(r^{-1})_t(t,x)} \ud x \ud t \\
&= -\iint (r^{-1})_x(t,x) \frac{\varphi_t(t,x)^3}{(r^{-1})_t(t,x)^2} \ud x \ud t + \\
 &\phantom{=} + \iint \frac{1-(r^{-1})_t(t,x)}{(r^{-1})_t(t,x)} \varphi^2_t(t,x) \varphi_x(t,x) \ud x \ud t + E(\varphi)
\end{align*}
Now we have to choose $r^{-1}$ in such a way that the sum of the integrals is always negative. Let's choose a point $(t_0,x_0)$ and $\delta > 0$ such that 
\[ \varphi_t(t,x) > 0,\ \text{for}\ (t,x) \in [t_0-\delta, t_0+\delta] \times [x_0-\delta,x_0+\delta]. \]
Define
\[ r^{-1}(t,x) = t + \epsilon^{m+a} f(t) g(\frac{x-x_0}{\epsilon}). \]
We require that $f \equiv 0$ for $t \notin [t_0-\delta,t_0+\delta]$ and $f \geq 0$. For $g$ we require that $g$ be constant for $x \notin [0, 1]$ and $g' \geq 0$. The proof also works for $\varphi_t(t, x) < 0$, in this case we would require $g' \leq 0$. 

First we check that $\lVert r(t,x) - t \rVert_{C^{m,n}}  \leq \wt D \epsilon^a$ for some constant $\wt D > 0$. We see that
\[ \lVert r^{-1}(t,x) - t \rVert_{C^{m,n}} \leq \epsilon^{m+a} \lVert g(\epsilon^{-1}(x-x_0)) \rVert_{C^m} \lVert f \rVert_{C^n} \leq \epsilon^a \ol D \]
with $\ol D$ depending only on $g$ and $f$. Then we use the rules for differentiating the inverse function to see we can get a similar estimate for $r(t,x)$ with some other constant $\wt D$. Because $f$ vanishes outside a small neighbourhood of $t_0$ we have $r(0,x) = 0$ and $r(T,x) = T$ and thus the new path $\psi$ has the same endpoints as $\varphi$. 

Finally we estimate the energy gained
\begin{align*}
 -\iint &(r^{-1})_x(t,x) \frac{\varphi_t(t,x)^3}{(r^{-1})_t(t,x)^2} \ud x \ud t = \\
=& -\epsilon^{m+a-1} \int_{t_0-\delta}^{t_0+\delta} \int_{x_0}^{x_0+\epsilon} \frac{f(t) g'(\epsilon^{-1}(x-x_0))}{(1+\epsilon^{m+a}f'(t)g(\epsilon^{-1}(x-x_0)))^2} \varphi_t(t,x)^3 \ud x \ud t = \\
=& -\epsilon^{m+a} \int_{t_0-\delta}^{t_0+\delta}  \int_0^1 \frac{f(t) g'(y)}{1+\epsilon^{m+a}f'(t)g(y)} \varphi_t(t,x_0 + \epsilon y)^3 \ud y \ud t\enspace,
\end{align*}
where we used the substitution $\epsilon y = x-x_0$ in the last step. We know that $\varphi_t(t, x_0+\epsilon y) > 0$ stays away from 0 on the domain of integration by our choice of $(t_0, x_0)$ and we can approximate the denominator by
\[ \frac{1}{1+\epsilon^{m+a}f'(t)g(y)} = 1 + o(\epsilon^{m+a}), \]
which means that the whole integral has a negative part of order $\epsilon^{m+a}$ plus some terms of order at least $\epsilon^{2m+2a}$,
\[ -\iint (r^{-1})_x(t,x) \frac{\varphi_t(t,x)^3}{(r^{-1})_t(t,x)^2} \ud x \ud t < -\epsilon^{m+a} X + \epsilon^{2m+2a} (\ldots) + \ldots\]
with $X>0$. The other integral
\begin{align*}
 \iint &\frac{1-(r^{-1})_t(t,x)}{(r^{-1})_t(t,x)} \varphi^2_t(t,x) \varphi_x(t,x) \ud x \ud t = \\
=& \epsilon^{m+a} \int_{t_0-\delta}^{t_0+\delta} \int_{x_0}^{x_0+\epsilon} \frac{f'(t) g(\epsilon^{-1}(x-x_0))}{1+\epsilon^{m+a}f'(t)g(\epsilon^{-1}(x-x_0))} \varphi_t(t,x)^2 \varphi_x(t,x) \ud x \ud t = \\
=& \epsilon^{m+a+1} \int_{t_0-\delta}^{t_0+\delta} \int_{0}^{1} \frac{f'(t) g(y)}{1+\epsilon^{m+a}f'(t)g(y)} \varphi_t(t,x_0+\epsilon y)^2 \varphi_x(t,x_0+\epsilon y) \ud x \ud t
\end{align*}
is of order at least $\epsilon^{m+a+1}$. Therefore their difference has a negative part of order $\epsilon^{m+a}$ and other terms of higher order. Thus we have for small $\epsilon$ the result
\begin{align*}
E&(\varphi) - E(\psi) = \\
 =&\iint (r^{-1})_x(t,x) \frac{\varphi_t(t,x)^3}{(r^{-1})_t(t,x)^2} \ud x \ud t
 -\iint \frac{1-(r^{-1})_t(t,x)}{(r^{-1})_t(t,x)} \varphi^2_t(t,x) \varphi_x(t,x) \ud x \ud t \\
\geq& \epsilon^{m+a} X + \epsilon^{m+a+1} (\ldots).
\end{align*}
This completes the proof

{\bf Proof of claim.} The claim is essentially follows by applying Fa\`a di Bruno's formula, a higher order version of the chain rule. For $i \leq m$ and $j \leq n$ we have to estimate
\[ (1 + \lvert x \rvert^2)^k \partial_x^i \partial_t^j (\psi -  \varphi)(t,x) = (1 + \lvert x \rvert^2)^k (\partial_x^i \partial_t^j (\varphi(r(t,x),x)) - \partial_x^i \partial_t^j \varphi(t,x)) \]
First, since $\varphi(r(t,x),x)$ has two $x$-dependences, we split them
\[ \partial_x^i \partial_t^j = \partial_t^j \left( (\partial_x^i \varphi)(r(t,x),x) + \sum_{l=1}^i \binom{i}{l} \partial_x^l(\varphi \circ r) (t,x) (\partial_x^{i-l}\varphi)(r(t,x),x) \right)\]
Here we denote by $\partial_x(\varphi \circ r) (t,x)$ the differentiation of $\varphi(r(t,x),x)$ with respect to the first $x$. Each term in the sum will contain some $x$-derivative of $r$, which means that we can estimate
\[ (1+\lvert x \rvert^2)^k \partial_t^j \sum_{l=1}^i \ldots \leq D_1 \lVert \partial_x r \rVert_{C^{m-1,n}} \]
with the constant depending on $\lVert \varphi \rVert_{\mathcal S^{k, m, n+m}}$. Next we apply Fa\`a di Bruno's formula for the $t$-derivatives
\begin{align*}
 \partial_t^j &\left( (\partial_x^i \varphi )(r(t,x),x) \right) = (\partial_t^j \partial_x^i \varphi)(r(t,x),x) r_t(t,x)^i + \\
&+ j! \sum_{j>a>0} (\partial_t^a \partial_x^i \varphi)(r(t,x),x) \sum_{\substack{\alpha_1 +\ldots \alpha_a=j \\ \alpha_l > 0}} \prod_{l=1}^a \frac{\partial_t^{\alpha_l} r(t,x)}{\alpha_l!}
\end{align*}
Since in the sum $a < j$ and each $\alpha_j > 0$ in each decomposition $\alpha_1 +\ldots \alpha_a=j$ we will have at least one $\alpha_l \geq 2$. Therefore we can estimate
\[ (1+ \lvert x \rvert^2)^k j! \sum_{j > a > 0} \ldots \leq D_2 \lVert r_t \rVert_{C^{0,n-1}}\]
with $D_2$ depending on $\lVert \varphi \rVert_{\mathcal S^{k, m, n-1}}$. Next we use Taylor expansion for the remaining term
\[ (\partial_t^j \partial_x^i \varphi) (r(t,x),x) = \partial_t^j \partial_x^i \varphi(t,x) + (r(t,x)-t) \partial_t^{j+1} \partial_x^j \varphi(\xi, x) \]
with $\xi$ lying between $t$ and $r(t,x)$. Finally we note that by assumption $\lvert r_t(t,x)-1 \rvert < \epsilon$ and thus by putting it all the estimates together we obtain
\[ \lvert (1 + \lvert x \rvert^2)^k (\partial_x^i \partial_t^j (\varphi(r(t,x),x)) - \partial_x^i \partial_t^j \varphi(t,x)) \rvert \leq D \epsilon \]
for a contant $D$, depending on $\lVert \varphi \rVert_{\mathcal S^{k, m, n+m}}$ or $\lVert \varphi \rVert_{\mathcal S^{k, m, n+1}}$, in case $m=0$. This completes the proof of the claim.
\end{proof}

Now we turn our attention to the Virasoro-Bott group.

\begin{lemma}
\label{lmm:vir_en}
Let $(\varphi(t,x), \alpha(t))$ be a path in the Virasoro group $\mathbb R \times_c \operatorname{Diff}_{\mathcal S}(\mathbb R)$. Given $\delta > 0$ and $k, m, n \in \mathbb N$ with $k\geq 1$, $m\geq 2$ and $n \geq 1$, there exists a path $(\psi(t, x), \beta(t))$ close to the original path
\[ \lVert \psi - \varphi \rVert_{\mathcal S^{k, m, n}} + \lVert \alpha - \beta \rVert_{C^n} < \delta \]
with the same endpoints 
\[ \psi(0,.) = \varphi(0,.), \quad \psi(T,.) = \varphi(T,.), \quad \beta(0) = \alpha(0), \quad \beta(T) = \alpha(T) \]
and less energy
\[ E(\psi, \beta) < E(\varphi, \alpha). \]
\end{lemma}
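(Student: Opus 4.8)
The plan is to split the energy as $E(\psi,\beta)=E_1(\psi)+E_2(\psi,\beta)$, where $E_1(\psi)=\iint\psi_t^2\psi_x\,\mathrm dx\,\mathrm dt$ is the diffeomorphism part already handled by Lemma~\ref{lmm:en_bound}, and $E_2(\psi,\beta)=\int_0^T(\beta_t-G(\psi))^2\,\mathrm dt$ with $G(\psi)(t)=\int_{\mathbb R}\psi_{tx}\psi_{xx}/\psi_x^2\,\mathrm dx$. The key observation is that $\beta$ enters only through $E_2$, and that for a fixed $\psi$ and fixed endpoints $\beta(0)=\alpha(0)$, $\beta(T)=\alpha(T)$ the functional $\beta\mapsto E_2(\psi,\beta)$ is minimised exactly when $\beta_t-G(\psi)$ is constant, with minimal value $(\Delta_\alpha-\int_0^T G(\psi)\,\mathrm dt)^2/T$, where $\Delta_\alpha=\alpha(T)-\alpha(0)$. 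I would therefore first dispose of the easy case: if $\alpha$ is not this minimiser for the original $\varphi$, then moving $\beta$ a small amount from $\alpha$ towards the minimiser (keeping $\psi=\varphi$) strictly lowers $E_2$, leaves $E_1$ untouched, and stays inside $U$; this settles the lemma. Hence I may assume that $\alpha_t-G(\varphi)\equiv\bar A/T$ is constant, where $\bar A=\Delta_\alpha-\int_0^T G(\varphi)\,\mathrm dt$, so that $E_2(\varphi,\alpha)=\bar A^2/T$.

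In the remaining case I would take the reparametrised path $\psi=\varphi\circ r$ from Lemma~\ref{lmm:en_bound}, which satisfies $\lVert\psi-\varphi\rVert_{\mathcal S^{k,m,n}}<\epsilon^a$ and $E_1(\varphi)-E_1(\psi)\ge C\epsilon^{m+a}$, and then choose $\beta$ to minimise $E_2(\psi,\cdot)$. Writing $\Delta G=\int_0^T(G(\psi)-G(\varphi))\,\mathrm dt$, the minimal value is $(\bar A-\Delta G)^2/T$, whence
\[
E(\psi,\beta)-E(\varphi,\alpha)\le -C\epsilon^{m+a}+\frac{\Delta G^2-2\bar A\,\Delta G}{T}.
\]
Everything then hinges on estimating $\Delta G$. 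Here I expect the hypotheses $k\ge1$, $m\ge2$, $n\ge1$ to enter: $k\ge1$ makes the $x$-integral defining $G$ converge against the weight $(1+|x|^2)^k$, while $m\ge2$ and $n\ge1$ ensure that the two $x$-derivatives and one $t$-derivative occurring in $G$ are controlled by the $\mathcal S^{k,m,n}$-norm. Expanding $G(\psi)-G(\varphi)$ in $r-t$ produces a naively leading term of order $\epsilon^{m+a-1}$ coming from $\partial_x^2 r$; the main technical point is that an integration by parts in $x$ (using that $r-t$ is $x$-localised so that the boundary terms vanish) cancels this term, leaving $G(\psi)-G(\varphi)=O(\epsilon^{m+a})$ pointwise and in every $t$-derivative. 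This simultaneously gives $\lVert\beta-\alpha\rVert_{C^n}=O(\epsilon^{m+a})<\delta$ and $\Delta G=D\,\epsilon^{m+a}(1+o(1))$ for an explicit coefficient $D$.

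It then remains to arrange the sign. If $\bar A=0$ the perturbation of $E_2$ is of order $\Delta G^2=O(\epsilon^{2(m+a)})$, negligible against the gain $C\epsilon^{m+a}$, and the energy strictly decreases. If $\bar A\ne0$ I would exploit the freedom in the reparametrisation — the sign of the $x$-profile $g'$, the base point $x_0$ at which $\varphi_t$ is required to be nonzero, and, if necessary, a superposition of two such bumps — to force $\operatorname{sign}(\Delta G)=\operatorname{sign}(\bar A)$ while keeping the $E_1$-gain positive; then $\Delta G^2-2\bar A\,\Delta G\le0$ for small $\epsilon$ and the total energy again decreases. The main obstacle is exactly this last step: the change $\Delta G$ of the Bott-cocycle integral and the $E_1$-gain are both of the same order $\epsilon^{m+a}$ and scale identically under any sharpening of the bump, so one cannot make $\Delta G$ subdominant; the decrease must instead be secured by controlling the \emph{sign} of $\Delta G$ (equivalently of the leading coefficient $D$, which turns out to involve the expression $u_t^2-2\,(\varphi_t/\varphi_x)\,u_{tx}$ with $u=\log\varphi_x$) through an adaptive choice of the deformation, together with the verification that such a choice always exists.
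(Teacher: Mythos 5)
Your reduction to the case $\alpha_t-G(\varphi)\equiv\bar A/T$ (by convexity of $E_2$ in $\beta$) is correct and is a tidy observation the paper does not make; the bound
\[
E(\psi,\beta)-E(\varphi,\alpha)\le -C\epsilon^{m+a}+\frac{\Delta G^2-2\bar A\,\Delta G}{T}
\]
is also right, and your claim that an integration by parts improves $\Delta G$ from the naive $O(\epsilon^{m+a-1})$ to $O(\epsilon^{m+a})$, with leading density $u_t^2-2(\varphi_t/\varphi_x)u_{tx}$, is consistent with a direct computation. But the step you yourself call ``the main obstacle'' is a genuine gap, not a technicality, and it cannot be closed inside your framework. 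In your scheme the energy gain and $\Delta G$ are produced by the \emph{same} reparametrisation bump: both are linear in $r_x$, both are of order $\epsilon^{m+a}$, and both have coefficients given by local data of $\varphi$ integrated against $f\,g'$. So the sign of $\Delta G$ is dictated by $\varphi$, not chosen by you. The cleanest failure: take $\varphi_t\equiv 0$ and $\alpha(t)=\bar A t/T$ with $\bar A\neq 0$ (a pure-center path, already of your reduced form). Every time-reparametrisation fixes such a $\varphi$ pointwise, so your construction produces no competitor at all, while the lemma still holds for this path and must be proved by a different deformation. Even when $\varphi_t\not\equiv 0$, what you need is that the combined coefficient $-\varphi_t^3+\tfrac{2\bar A}{T}\bigl(u_t^2-2(\varphi_t/\varphi_x)u_{tx}\bigr)$ be nonzero somewhere; this is a condition on $\varphi$ that you never verify, and superposing several bumps does not help because each bump's contributions to $E_1$ and to $\Delta G$ are locked together at the same order $\epsilon^{m+a}$.

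The paper sidesteps the sign problem entirely by \emph{decoupling} the two tasks. After the energy-saving reparametrisation $\wt\varphi$ (taken with $a=m+1$, so the gain is $C\epsilon^{2m+1}$ and the endpoint defect $C(\wt\varphi)-C(\varphi)$ is only $O(\epsilon^{2m})$), it adds a second, \emph{additive} perturbation
\[
\psi=\wt\varphi+\epsilon^{2m+\frac32}\,\lambda\, f(t)\,g\!\left(\epsilon^{-2}(x-x_0)\right),
\]
living on the finer spatial scale $\epsilon^2$. Because the endpoint functional $C$ involves second $x$-derivatives while the energy involves only first derivatives, this bump moves $C(\psi)$ by $\epsilon^{2m-\frac12}\lambda X(\epsilon)$ --- larger than the defect, with sign and magnitude freely adjustable through the parameter $\lambda$ --- at an energy cost of only $O(\epsilon^{2m+\frac32})$, negligible against the gain $C\epsilon^{2m+1}$. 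The paper then defines $\beta$ so that the second term of the energy is \emph{exactly} unchanged (rather than minimised), and matches the endpoint exactly by solving $C(\psi)=C(\varphi)$ for $\lambda$; no sign information about $\varphi$ is ever needed, and the pure-center case is covered by the same bump. This first-versus-second-derivative scaling asymmetry is precisely the ingredient missing from your single-deformation proposal, and without it the $\bar A\neq 0$ case remains unproved.
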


\begin{proof}
We will first sketch the idea for the proof before going into the details. The energy for the Virasoro group is given by
\[ E(\varphi, \alpha) = \iint \varphi_t^2 \varphi_x \ud x \ud t + \int_0^T \left(\alpha_t - \int_{\mathbb R} \frac{\varphi_{tx}\varphi_{xx}}{\varphi_x^2} \ud x \right)^2 \ud t. \]
It consists of the energy of the diffeomoprhism part and a second term, that measures, how much the path deviates from a horizontal one. We will first apply lemma \ref{lmm:en_bound} to obtain a path $\wt \varphi(t,x)$ such that the energy in the diffeomorphism group is smaller, $E(\wt \varphi) < E(\varphi)$. We would like to define the extension part via
\[ \beta_t - \int_{\mathbb R} \frac{\wt \varphi_{tx}\wt \varphi_{xx}}{\wt \varphi_x^2} \ud x = \alpha_t - \int_{\mathbb R} \frac{\varphi_{tx}\varphi_{xx}}{\varphi_x^2} \ud x, \]
and of course $\beta(0) = \alpha(0)$, because this would imply that the second term of the energy remains the same. However, we need to match the endpoints of the curves as well, which means that we require
\[ \int_0^T \int_{\mathbb R} \frac{\wt \varphi_{tx}\wt \varphi_{xx}}{\wt \varphi_x^2} \ud x \ud t = \int_0^T \int_{\mathbb R} \frac{\varphi_{tx}\varphi_{xx}}{\varphi_x^2} \ud x \ud t. \]
This will in general not be the case. Therefore we perturb the path $\wt \varphi(t,x)$ a little and obtain another path $\psi(t,x)$, such that the energy of the diffeomorphism part is still less than for the original path, $E(\psi) < E(\varphi)$ and the endpoints match. Since the energy depends only on first derivatives, but the endpoint on second derivatives, we are able to move the endpoint by larger amounts, while keeping the energy close to where we started.

Now for the implementation of this plan. Let $\epsilon > 0$ be small and $\wt \varphi(t,x)$ be a path as given from lemma \ref{lmm:en_bound} (with $a=m+1$), such that $\lVert \wt \varphi - \varphi \rVert_{\mathcal S^{k,m,n}} < \epsilon^{m+1}$ and 
\[ E(\varphi) - E(\wt \varphi) \geq C \epsilon^{2m+1} \]
 for some constant $C > 0$, depending only on $\varphi$. We now define the perturbed path via
\[ \psi(t,x) = \wt \varphi(t,x) + \epsilon^{2m+\frac 3 2} \lambda f(t) g\left(\epsilon^{-2}(x-x_0)\right) \]
with some functions $f(t)$, $g(x)$, a point $x_0$ and a constant $\lambda$, which we can choose. We require $g \in \mathcal S(\mathbb R)$ to be rapidly vanishing and $f(t)$ to vanish for $t=0,T$, such that $\psi$ has the same endpoints as $\wt \varphi$. It is easy to see that $\lVert \psi - \wt \varphi \rVert_{\mathcal S^{k,m,n}} = O(\epsilon^{\frac 3 2})$. As discussed above, we modify the $\mathbb R$-component of the curve as follows
\begin{align*}
\partial_t \beta(t) &= \partial_t \alpha(t) + \int \frac{\psi_{tx} \psi_{xx}}{\psi_x^2} \ud x - \int \frac{\varphi_{tx} \varphi_{xx}}{\varphi_x^2} \ud x \\
\beta(0) &= \alpha(0).
\end{align*}

{\bf Claim A.} We have $\lVert \beta - \alpha \rVert_{C^n} \leq D \lVert \psi - \varphi \rVert_{\mathcal S^{1,2,n}}$ with a constant $D$ depending only on $\varphi$.

This claim, whose proof we postpone until later, ensures that the new path is close enough to the old one. Next we estimate the energy of the path

\begin{align*}
E_{\operatorname{Vir}}(\psi, \beta) &= E_{\operatorname{Diff}}(\psi) + \int_0^T \left( \partial_t \beta(t) - \int \frac{\psi_{tx} \psi_{xx}}{\psi_x^2} \ud x \right)^2 \ud t \\
&= E_{\operatorname{Diff}}(\psi) - E_{\operatorname{Diff}}(\varphi) + E_{\operatorname{Diff}}(\varphi) + \int_0^T \left( \partial_t \alpha(t) - \int \frac{\varphi_{tx} \varphi_{xx}}{\varphi_x^2} \ud x \right)^2 \ud t \\
&\leq \lvert E_{\operatorname{Diff}}(\psi) - E_{\operatorname{Diff}}(\wt \varphi) \rvert + E_{\operatorname{Diff}}(\wt \varphi) - E_{\operatorname{Diff}}(\varphi) + E_{\operatorname{Vir}}(\varphi, \alpha) \\
&\leq \lvert E_{\operatorname{Diff}}(\psi) - E_{\operatorname{Diff}}(\wt \varphi) \rvert - C \epsilon^{2m+1} + E_{\operatorname{Vir}}(\varphi, \alpha)
\end{align*}

{\bf Claim B.} The energy $E(\psi)$ of the perturbed path satisfies the estimate $\lvert E(\psi) - E(\wt \varphi) \rvert = O(\epsilon^{2m+\frac 3 2})$.

Therefore, for small $\epsilon$ the difference $\lvert E_{\operatorname{Diff}}(\psi) - E_{\operatorname{Diff}}(\wt \varphi) \rvert - C \epsilon^{2m+1}$ will be negative, which means that the new path uses less energy than the original,
\[ E_{\operatorname{Vir}}(\psi, b) < E_{\operatorname{Vir}}(\varphi, a). \]

Finally we need to choose $f, g, x_0$ and $\lambda$ in such a way that the endpoint $\beta(T) = \alpha(T)$ is preserved. For this we need to study the difference
\[ \int_0^T \int_{\mathbb R} \frac{\psi_{tx} \psi_{xx}}{\psi_x^2} \ud x \ud t - \int_0^T \int_{\mathbb R} \frac{\varphi_{tx} \varphi_{xx}}{\varphi_x^2} \ud x \ud t. \]
If the difference equals 0, we've accomplished our task. The neccessary second derivatives of $\psi$ are 
\begin{align*}
\psi_{tx}(t,x) &= \wt \varphi_{tx}(t,x) + \epsilon^{2m-\frac 1 2} f'(t) g'(\epsilon^{-2}(x-x_0)) \\
\psi_{xx}(t,x) &= \wt \varphi_{xx}(t,x) + \epsilon^{2m-\frac 5 2} f(t) g''(\epsilon^{-2}(x-x_0))
\end{align*}
To simplify the formulas, let us introduce the notation
\[ C(\varphi) = \iint \frac{\varphi_{tx} \varphi_{xx}}{\varphi_x^2} \ud x \ud t. \]
Then we can write the difference as
\begin{align*}
&C(\psi) - C(\varphi) = C(\psi) - C(\wt \varphi) + C(\wt \varphi) - C(\varphi) \\
&= \epsilon^{2m-\frac 5 2} \iint \frac{f(t) g''(\epsilon^{-2}(x-x_0)) \wt \varphi_{tx}(t,x)}{\wt \varphi_x(t,x)^2} \ud x\ud t + \epsilon^{2m-\frac 1 2} \iint \ldots + C(\wt \varphi) - C(\varphi)
\end{align*}
Each of the integrals contains $g(\epsilon^{-2}(x-x_0))$ or some derivative as a factor. Therefore we can perform the substitution $\epsilon^2 y = x - x_0$ to gain another factor of $\epsilon^2$. Let us define
\[ X(\epsilon) = \iint \frac{f(t) g''(y) \wt \varphi_{tx}(t,\epsilon^2 y+ x_0)}{\wt \varphi_x(t,\epsilon^2 y + x_0)^2} \ud y \ud t.\]
Choose $x_0$ such that $\varphi_{tx}(t, x_0) \neq 0$ and $g$ and $f$ such that $X(0) \neq 0$. Note that in this case $X(\epsilon)$ is bounded away from 0 for small $\epsilon$. Then
\[ C(\psi) - C(\varphi) = \epsilon^{2m-\frac 1 2} \lambda X(\epsilon) + \epsilon^{2m+\frac 3 2} \iint \ldots + C(\wt \varphi) - C(\varphi). \]
The difference $C(\wt \varphi) - C(\varphi)$ is of order $O(\epsilon^{2m})$, since $\lVert \wt \varphi - \varphi \rVert_{\mathcal S^{k, 2, 1}} = O(\epsilon^{2m-1})$ and therefore can be written as 
\[ C(\wt \varphi) - C(\varphi) = \epsilon^{2m} Y(\epsilon) \]
with $Y(\epsilon)$ bounded near $0$. With everything put together we get
\[ C(\psi) - C(\varphi) = \epsilon^{2m-\frac 1 2} (\lambda X(\epsilon) + \epsilon^{\frac 1 2}  Y(\epsilon) + \epsilon (\cdots) ) \]
This shows that by suitably choosing $\lambda$ we can achieve our goal of $C(\psi) - C(\varphi) = 0$. All that remains is to verify the claims made above and the proof will be complete.

{\bf Proof of Claim A.} For $i \leq n$ we have to estimate
\[ \partial_t^i \beta - \partial_t^i \alpha = \partial_t^{i-1} \int \frac{\psi_{tx} \psi_{xx}}{\psi_x^2} \ud x - \partial_t^{i-1} \int \frac{\varphi_{tx} \varphi_{xx}}{\varphi_x^2} \ud x. \]
The denominator doesn't present difficulties, since $\psi$ and $\varphi$ are diffeomorphisms that decay to the identity and hence $\psi_x$ and $\varphi_x$ are bounded away from 0. It is shown in \cite[6.4]{Michor109} that multiplication of rapidly decaying functions is a continuous bilinear map and from
\[ \left \lvert \int \psi \ud x \right \rvert \leq \int \frac{\ud x}{1 + x^2} \lVert (1+x^2) \psi \rVert_\infty \]
we see that integration is bounded as well. Hence we get the required estimate
\[ \lVert \beta - \alpha \rVert_{C^n} \leq D \lVert \psi - \varphi \rVert_{\mathcal S^{1,2,n}}. \]

{\bf Proof of Claim B.} The derivatives of $\psi$ are
\begin{align*}
\psi_t(t,x) &= \wt \varphi_t(t,x) + \epsilon^{2m+\frac 3 2} f'(t) g(\epsilon^{-2}(x-x_0)) \\
\psi_x(t,x) &= \wt \varphi_x(t,x) + \epsilon^{2m-\frac 1 2} f(t) g'(\epsilon^{-2}(x-x_0))
\end{align*}
With this we estimate the energy
\begin{align*}
E(\psi) &= \iint \psi_t^2 \psi_x \ud x \ud t \\
&= \iint \wt \varphi_t^2 \wt \varphi_x + \epsilon^{2m-\frac 1 2} f(t) g'( \epsilon^{-2}(x-x_0)) \psi_t^2(t,x) + \epsilon^{2m+\frac 3 2} ( \ldots ) \ud x \ud t
\end{align*}
Now we substitute in the middle term $\epsilon^2 y = x - x_0$ to gain another factor of $\epsilon^2$ and so
\begin{align*}
E(\psi) &= E(\wt \varphi) + \iint \epsilon^{2m + \frac 3 2} f(t) g'(y) \psi_t^2(t, \wt \epsilon y + x_0) \ud x \ud t + O(\epsilon^{2m+\frac 3 2})
\end{align*}
we see that
\[ \lvert E(\psi) - E(\wt \varphi) \rvert = O(\epsilon^{2m+\frac 3 2}), \]
which proves the second claim.

This completes the proof.
\end{proof}

A simple reparametrisation argument shows that the same results hold for the length functional 
\[ L(\varphi) = \int_0^T \lvert \partial_t \varphi(t) \rvert_{\varphi(t)} \ud t \]
as well.

\begin{corollary}
Let $\varphi(t,x)$ with $t \in [0, T]$ be a path in $\operatorname{Diff}_{\mathcal S}(\mathbb R)$. Given $k,m,n \in \mathbb N$ there exists for small $\epsilon > 0$ a path $\psi(t,x)$ with the same endpoints, close to the original path
\[ \lVert \varphi - \psi^\epsilon\rVert_{\mathcal S^{k,m,n}} < \epsilon \]
with smaller length
\[ L(\psi) < L(\varphi). \]
The same result holds for the Virasoro-Bott group $\mathbb R \times_c \operatorname{Diff}_{\mathcal S}(\mathbb R)$.
\end{corollary}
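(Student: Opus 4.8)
The plan is to deduce the length statement from the energy statement via the elementary relationship between the two functionals. For any path $\varphi$ the Cauchy--Schwarz inequality gives
\[ L(\varphi)^2 = \left( \int_0^T \lvert \partial_t\varphi \rvert_{\varphi} \ud t \right)^2 \le T \int_0^T \lvert \partial_t \varphi \rvert_{\varphi}^2 \ud t = T\, E(\varphi), \]
with equality precisely when the speed $t \mapsto \lvert \partial_t\varphi(t)\rvert_{\varphi(t)}$ is constant, where $\lvert \partial_t\varphi\rvert_\varphi^2 = \int \varphi_t^2\varphi_x \ud x$. This is the only link between $L$ and $E$ that I shall need; note that it points in the wrong direction to conclude a length decrease from an energy decrease, unless the starting path already realises the equality.

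First I would reduce to a constant-speed path. Since $L$ is invariant under reparametrisations $t \mapsto \varphi(\rho(t),\cdot)$ of the time interval that fix the endpoints, I replace $\varphi$ by its constant-speed reparametrisation $\hat\varphi$, obtained by inverting the rescaled arc-length function $\ell(t) = \int_0^t \lvert\partial_s\varphi\rvert_\varphi \ud s$. This new path has the same image, the same endpoints and the same length $L(\hat\varphi) = L(\varphi)$, but now satisfies the equality $L(\hat\varphi)^2 = T\,E(\hat\varphi)$. Next I would apply Lemma \ref{lmm:en_bound} (respectively Lemma \ref{lmm:vir_en} in the Virasoro--Bott case) to $\hat\varphi$, producing a path $\psi$ with the same endpoints, arbitrarily close to $\hat\varphi$ in the relevant seminorm, and with strictly smaller energy $E(\psi) < E(\hat\varphi)$. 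Chaining the two estimates then yields
\[ L(\psi)^2 \le T\,E(\psi) < T\,E(\hat\varphi) = L(\hat\varphi)^2 = L(\varphi)^2, \]
so that $L(\psi) < L(\varphi)$, as required. The Virasoro--Bott case is identical once $\lvert\cdot\rvert_\varphi$ is read as the full $L^2$-norm that also records the central extension component, so that the same Cauchy--Schwarz sandwich applies with Lemma \ref{lmm:vir_en} in place of Lemma \ref{lmm:en_bound}.

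The main obstacle I anticipate is the reconciliation of the two notions of ``nearby'': the energy lemma delivers $\psi$ close to the reparametrised path $\hat\varphi$ rather than to $\varphi$ itself. Because $L$ is reparametrisation-invariant, $\varphi$ and $\hat\varphi$ represent the same geometric curve, so ruling out local length-minimality for $\hat\varphi$ rules it out for $\varphi$; this is harmless for the corollary but should be stated with care. A genuine technical point is the smoothness of the constant-speed reparametrisation: the function $\ell$ is strictly increasing, hence smoothly invertible, only where the speed does not vanish. One must therefore either restrict attention to paths with nowhere-vanishing speed, which includes all geodesics and thus all candidate minimisers, or treat the zero-speed set by a standard approximation, smoothing $\ell$ slightly before inverting and absorbing the resulting error into $\epsilon$.
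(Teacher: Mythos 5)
Your route is the same as the paper's --- reparametrise to constant speed, apply the energy lemma there, and convert the energy decrease into a length decrease --- and your direct appeal to Cauchy--Schwarz, $L(\psi)^2 \le T\, E(\psi)$, is even a mild simplification: the paper instead reparametrises the \emph{perturbed} path to constant speed as well (its map $g$) and uses $E(\wt\psi \circ g) \le E(\wt\psi)$, which then forces it to argue separately that $g$ is close to the identity; your inequality makes $g$ unnecessary. The genuine gap is the closeness requirement. The corollary asserts $\lVert \varphi - \psi \rVert_{\mathcal S^{k,m,n}} < \epsilon$, i.e.\ closeness to the \emph{original} path $\varphi$, whereas your construction delivers a path close to the constant-speed reparametrisation $\hat\varphi = \varphi \circ f$. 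Unless $\varphi$ already has constant speed, $\hat\varphi$ lies at a fixed positive distance from $\varphi$ in these seminorms, so for small $\epsilon$ your $\psi$ is \emph{not} in the prescribed neighbourhood of $\varphi$. Declaring this ``harmless'' because $L$ is reparametrisation-invariant amounts to re-reading the corollary as a statement about unparametrised curves; but the corollary, and the no-local-minima conclusion drawn from it, concerns a neighbourhood of the parametrised path $\varphi$ in $\mathcal S([0,T]\times\mathbb R)$, so this step cannot be waved away.

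The missing step is exactly the paper's final move, which you stop short of: compose back. Having obtained $\wt\psi$ near $\hat\varphi$ with $E(\wt\psi) < E(\hat\varphi)$, set $\psi = \wt\psi \circ f^{-1}$ (the paper takes $\wt\psi \circ g \circ f^{-1}$; with your Cauchy--Schwarz step the $g$ is superfluous). Then $L(\psi)^2 = L(\wt\psi)^2 \le T\,E(\wt\psi) < T\,E(\hat\varphi) = L(\hat\varphi)^2 = L(\varphi)^2$; the endpoints are unchanged because $f^{-1}$ fixes $t=0$ and $t=T$; and since $\psi - \varphi = (\wt\psi - \hat\varphi)\circ f^{-1}$ with $f$ a fixed diffeomorphism of $[0,T]$ depending only on $\varphi$, the chain rule gives $\lVert \psi - \varphi \rVert_{\mathcal S^{k,m,n}} \le C \lVert \wt\psi - \hat\varphi \rVert_{\mathcal S^{k,m,n}}$ with $C$ depending only on $f$, which can be made smaller than $\epsilon$. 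With this insertion your argument is complete, and the same composition handles the Virasoro--Bott case. Your closing remark on vanishing speed is a fair criticism of the paper, which tacitly assumes the constant-speed reparametrisation is smooth; note only that after smoothing the arc-length function the identity $T\,E(\hat\varphi) = L(\hat\varphi)^2$ becomes approximate, so the smoothing error must be chosen small relative to the quantitative energy gain $C\epsilon^{m+a}$ provided by Lemma \ref{lmm:en_bound}, which dictates the order in which the parameters are fixed.
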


\begin{proof}
Given a path $\varphi(t)$, let $\wt \varphi(t) = \varphi \circ f(t)$ be a reparametrisation with constant speed. For $\wt \varphi(t)$ we find a path $\wt \psi(t)$ close to it, with less energy $E(\wt \psi) < E(\wt \varphi)$. Then let $g(t)$ be a reparametrisation, such that $\wt \psi \circ g(t)$ has constant speed and finally define $\psi(t) = \wt \psi \circ g \circ f^{-1}(t)$.

Estimating the length is simple
\begin{align*}
L(\psi)^2 &= L(\wt \psi \circ g \circ f^{-1})^2 = L(\wt \psi \circ g)^2 \\
&= \frac{1}{T} \int_0^T \lvert \partial_t( \wt \psi\circ g)(t) \rvert_{\wt \psi \circ g(t)}^2 \ud t = \frac{1}{T} E(\wt \psi \circ g) \\
&\leq \frac {1}{T} E(\wt \psi) < \frac{1}{T} E(\varphi \circ f) = \frac{1}{T} \int_0^T \lvert \partial_t(\varphi \circ f)(t) \rvert_{\varphi \circ f(t)}^2 \ud t \\
&= L(\varphi \circ f)^2 = L(\varphi)^2
\end{align*}

Finally one has to show that $\wt \psi$ being $\epsilon$-close to $\wt \varphi$ also implies that $\psi = \wt \psi \circ g \circ f^{-1}$ is $\epsilon$-close to $\varphi = \wt \varphi \circ f^{-1}$. The concatenation with $f^{-1}$ doesn't present difficulties, since we can use the chain rule to express derivatives of $\varphi \circ f^{-1}$ using derivatives of $\varphi$ and $f$ only depends on the initial path $\varphi$.

We defined $g$ to be the reparametrisation such that $\wt \psi \circ g$ has constant speed and $\wt \psi$ is $\epsilon$-close to the path $\wt \varphi$, which already has constant speed. As long as we assume that $n\geq 1$, which means that the $t$-derivatives of paths have to $\epsilon$-close to those of the original path, it follows that $g$ has to be close to the identity reparametrisation.
\end{proof}


\begin{thebibliography}{10}

\bibitem{Arnold1966}
V.~I. Arnold.
\newblock Sur la g\'eometrie diff\'erentielle des groupes de {L}ie de dimension
  infinie et ses applications \`a l'hydrodynamique des fluides parfaits.
\newblock {\em Ann. Inst. Fourier}, 16:319--361, 1966.

\bibitem{Michor122}
Martin Bauer, Martins Bruveris, Philipp Harms, and Peter Michor.
\newblock Vanishing geodesic distance for the {R}iemannian metric with geodesic
  equation the {KdV}-equation.
\newblock {\em Annals of Global Analysis and Geometry}, 41:461--472, 2012.

\bibitem{Constantin2007}
A.~Constantin, T.~Kappeler, B.~Kolev, and P.~Topalov.
\newblock On geodesic exponential maps of the {V}irasoro group.
\newblock {\em Ann. Global Anal. Geom.}, 31(2):155--180, 2007.

\bibitem{Constantin2003}
Adrian Constantin and Boris Kolev.
\newblock Geodesic flow on the diffeomorphism group of the circle.
\newblock {\em Comment. Math. Helv.}, 78(4):787--804, 2003.

\bibitem{Ebin1970}
David~G. Ebin and Jerrold Marsden.
\newblock Groups of diffeomorphisms and the motion of an incompressible fluid.
\newblock {\em Ann. of Math. (2)}, 92:102--163, 1970.

\bibitem{GayBalmaz2009}
Fran{\c{c}}ois Gay-Balmaz.
\newblock Well-posedness of higher dimensional {C}amassa-{H}olm equations.
\newblock {\em Bull. Transilv. Univ. Bra\c sov Ser. III}, 2(51):55--58, 2009.

\bibitem{Khesin09}
Boris Khesin and Robert Wendt.
\newblock {\em The Geometry of Infinite-Dimensional Groups}.
\newblock Springer, 2009.

\bibitem{MichorG}
Andreas Kriegl and Peter~W. Michor.
\newblock {\em The convenient setting of global analysis}, volume~53 of {\em
  Mathematical Surveys and Monographs}.
\newblock American Mathematical Society, Providence, RI, 1997.

\bibitem{Michor109}
Peter~W. Michor.
\newblock Some geometric evolution equations arising as geodesic equations on
  groups of diffeomorphisms including the {H}amiltonian approach.
\newblock In {\em Phase space analysis of partial differential equations},
  volume~69 of {\em Progr. Nonlinear Differential Equations Appl.}, pages
  133--215. Birkh\"auser Boston, 2006.

\bibitem{Michor102}
Peter~W. Michor and David Mumford.
\newblock Vanishing geodesic distance on spaces of submanifolds and
  diffeomorphisms.
\newblock {\em Doc. Math.}, 10:217--245 (electronic), 2005.

\bibitem{Michor69}
P.W. Michor and T.~Ratiu.
\newblock Geometry of the {V}irasoro-{B}ott group.
\newblock {\em J. Lie Theory}, pages 293--309, 1998.

\bibitem{Misiolek1997}
Gerard Misio{\l}ek.
\newblock Conjugate points in the {B}ott-{V}irasoro group and the {K}d{V}
  equation.
\newblock {\em Proc. Amer. Math. Soc.}, 125(3):935--940, 1997.

\bibitem{OvsienkoKhesin87}
V.~Y. Ovsienko and B.~A. Khesin.
\newblock {K}orteweg--de~{V}ries superequations as an {E}uler equation.
\newblock {\em Funct. Anal. Appl.}, 21:329--331, 1987.

\bibitem{Segal1991}
Graeme Segal.
\newblock The geometry of the {K}d{V} equation.
\newblock {\em Internat. J. Modern Phys. A}, 6(16):2859--2869, 1991.
\newblock Topological methods in quantum field theory (Trieste, 1990).

\end{thebibliography}
\end{document}